\newcommand\C{{\mathbb{C}}}
\renewcommand\P{{\mathbf{P}}}
\newcommand\E{{\mathbf{E}}}
\newcommand\Var{\mathbf{Var}}
\renewcommand\Im{{\operatorname{Im}}}
\renewcommand\Re{{\operatorname{Re}}}
\newcommand\CN{{\mathcal N}}
\newcommand\BBR {{\mathbb R}}
\newcommand\BBC{{\mathbb C}}
\theoremstyle{plain}
 \newtheorem{theorem}{Theorem}
 \newtheorem{proposition}[theorem]{Proposition}
 \newtheorem{lemma}[theorem]{Lemma}
\theoremstyle{definition}
\begin{document}

\title[A note on the CLT for the Eigenvalue Counting Function]{A note on the Central Limit Theorem for the
Eigenvalue Counting Function of Wigner Matrices}

\author{Sandrine Dallaporta \and Van Vu}
\address{Institut de Math\'ematiques de Toulouse, UMR $5219$ du CNRS, Universit\'e de Toulouse, F-$31062$
Toulouse, France}
\email{sandrine.dallaporta@math.univ-toulouse.fr}
\address{Department of Mathematics, Rutgers, Piscataway, NJ 08854}
\email{vanvu@math.rutgers.edu}
\thanks{V. Vu is supported by research grants from AFORS and NSF}

\begin{abstract}
The purpose of this note is to establish a Central Limit Theorem for the number of eigenvalues of a Wigner
matrix in an interval. The proof relies on the correct aymptotics of the variance of the
eigenvalue counting function of GUE matrices due to Gustavsson, and its extension to large
families of Wigner matrices by means of the Tao and Vu Four Moment
Theorem and recent localization results by Erd\"{o}s, Yau and Yin.
\end{abstract}

\maketitle

% \setcounter{tocdepth}{2}
% \tableofcontents

\section{Introduction}

This note is concerned with the asymptotic behavior of the eigenvalue counting function,
that is the number $N_I$ of eigenvalues falling in an interval $I$, of families of Wigner matrices, when the
size of the matrix goes to infinity.
Wigner matrices are random Hermitian matrices $M_n$ of size $n$ such that, for $i<j$, the real and imaginary
parts of $(M_n)_{ij}$ are iid, with mean $0$ and variance $\frac{1}{2}$, $(M_n)_{ii}$ are iid with mean $0$
and variance $1$. An important example of Wigner matrices is the case where the entries are Gaussian, giving
rise to the so-called Gaussian Unitary Ensemble (GUE). GUE matrices will be denoted by $M_n'$. In this case,
the joint law of the eigenvalues is
known, allowing for complete descriptions of their limiting behavior both in the global and local regimes (cf.
for example \cite{AGZ}).

Denote by $\lambda_1, \dots, \lambda_n$ the real eigenvalues of the normalized Wigner matrix
$W_n=\frac{1}{\sqrt{n}}M_n$. The classical Wigner theorem states that the empirical distribution
$\frac{1}{n}\sum_{j=1}^n\delta_{\lambda_j}$
on the eigenvalues of $W_n$
converges weakly almost surely as $ n \to \infty$ to the semicircle law
$ d \rho_{sc}(x) = \frac{1}{2\pi}\sqrt{4-x^2}{\bf 1}_{[-2,2]}(x) dx$.
Consequently, for any interval $I \subset \BBR$,
\begin{equation*}
\frac{1}{n} N_I(W_n) = \frac{1}{n} \sum_{j=1}^n {\bf 1}_{\{ \lambda_j \in I \}}
      \underset{n \to \infty}{\to} \rho_{sc}(I) \quad  \textrm{almost surely.}
\end{equation*}
At the fluctuation level, it is known, due to the particular determinantal structure of the
GUE, that

\begin {theorem} [Costin-Lebowitz \cite{CL}, Soshnikov \cite{soshnikov} (see \cite{AGZ})] \label{CLS}

Let $M_n'$ be a GUE matrix. Set $W_n'=\frac{1}{\sqrt{n}}M_n'$. Let $I_n$ be an interval in $\mathbb{R}$.
If $\Var(N_{I_n}(W_n')) \underset{n \to \infty}{\to} \infty$, then
\begin{equation}
\frac{N_{I_n}(W_n')-\E[N_{I_n}(W_n')]}{\sqrt{\Var(N_{I_n}(W_n'))}} \underset{n \to \infty}{\to}
\mathcal{N}(0,1)
\end{equation}
in distribution.
\end{theorem}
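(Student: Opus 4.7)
The plan is to exploit the determinantal structure of the GUE eigenvalue process and reduce the theorem to a classical CLT for sums of independent Bernoulli variables. The eigenvalues of $W_n'$ form a determinantal point process on $\BBR$ whose correlation kernel $K_n$ is (up to rescaling) the standard Hermite kernel. For any Borel interval $I$, the integral operator $K_n \mathbf{1}_I$ acting on $L^2(I)$ is self-adjoint, trace-class, with spectrum contained in $[0,1]$; denote its eigenvalues by $\mu_1(I), \mu_2(I), \ldots$.

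The first and principal step, going back to Costin--Lebowitz and Soshnikov, is to establish the distributional identity
\begin{equation*}
N_I(W_n') \stackrel{d}{=} \sum_{k \ge 1} \xi_k,
\end{equation*}
where the $\xi_k$ are mutually independent Bernoulli variables with parameters $\mu_k(I)$. One computes the probability generating function of $N_I$ from the standard expansion into $k$-point correlation functions of a determinantal process, obtaining
\begin{equation*}
\E\bigl[z^{N_I(W_n')}\bigr] = \det\bigl(I - (1-z)\, K_n \mathbf{1}_I\bigr) = \prod_{k \ge 1}\bigl(1 - (1-z)\,\mu_k(I)\bigr),
\end{equation*}
and recognizes each factor as $\E[z^{\xi_k}]$ with $\xi_k \sim \mathrm{Bernoulli}(\mu_k(I))$. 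Since the distribution of the integer-valued variable $N_I$ is determined by its generating function, the representation follows.

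With this representation in hand, $\E[N_I] = \sum_k \mu_k(I)$ and
\begin{equation*}
\sigma_n^2 := \Var(N_{I_n}(W_n')) = \sum_{k \ge 1} \mu_k(I_n)\bigl(1-\mu_k(I_n)\bigr).
\end{equation*}
Since each centered summand satisfies $|\xi_k - \mu_k(I_n)| \le 1$,
\begin{equation*}
\frac{1}{\sigma_n^{3}}\sum_{k \ge 1}\E\bigl|\xi_k - \mu_k(I_n)\bigr|^{3} \le \frac{1}{\sigma_n^{3}}\sum_{k \ge 1}\mu_k(I_n)\bigl(1-\mu_k(I_n)\bigr) = \frac{1}{\sigma_n} \underset{n\to\infty}{\longrightarrow}0
\end{equation*}
whenever $\sigma_n \to \infty$, so Lyapunov's condition holds with $\delta = 1$. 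The Lyapunov CLT for triangular arrays of independent bounded random variables then yields the asymptotic normality of $\sigma_n^{-1}(N_{I_n}(W_n') - \E N_{I_n}(W_n'))$.

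The main obstacle is the first step: the multiplicative factorization of the generating function is a purely algebraic-analytic consequence of the determinantal structure of the GUE and has no straightforward counterpart for general Wigner matrices; it is precisely this gap that later sections of the paper bridge via the Four Moment Theorem. Once the Bernoulli representation is granted, the probabilistic portion is automatic and requires no further hypothesis on $I_n$ beyond the divergence of the variance, because the summands are uniformly bounded by construction.
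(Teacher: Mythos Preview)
Your argument is correct and is precisely the classical Costin--Lebowitz/Soshnikov proof: exploit the determinantal structure to write $N_{I_n}$ in law as a sum of independent Bernoulli variables with parameters the eigenvalues of the restricted kernel, then apply Lyapunov's CLT using the trivial bound $|\xi_k-\mu_k|\le 1$. There is nothing to compare against, however, because the paper does not supply its own proof of this statement: Theorem~\ref{CLS} is quoted as a known input from \cite{CL}, \cite{soshnikov} (with the textbook reference \cite{AGZ}), and the paper's contribution begins afterwards, with Gustavsson's variance asymptotics and the extension to Wigner matrices via the Four Moment Theorem and the Erd\H{o}s--Yau--Yin localization. Your closing paragraph correctly identifies why this proof does not transfer directly to non-Gaussian Wigner ensembles, which is exactly the gap the remainder of the paper addresses.
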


In $2005$, Gustavsson \cite{Gustavsson} was able to fully describe for GUE matrices
the asymptotic
behavior of the variance of the counting function $N_I(W_n')$ for intervals $I=[y,+\infty)$ with $y \in
(-2,2)$
strictly in the bulk of the semicircle law. He  established that
\begin {equation} \label {Gustavsson}
\E[N_I(W_n')]=n\rho_{sc}(I)+O \Big (\frac{\log n}{n} \Big ) \quad {\hbox {and}} \quad
\Var(N_I(W_n'))=\Big(\frac{1}{2\pi^2}+o(1)\Big)\log n.
\end{equation}
In particular therefore, if $I=[y,+\infty)$ with $y \in (-2,2)$,
\begin {equation} \label {CLT}
\frac{N_I(W_n')-\E[N_I(W_n')]}{\sqrt{\Var(N_I(W_n'))}} \underset{n \to \infty}{\to} \CN(0,1).
\end{equation}
as well as
\begin{equation} \label {CLTnumerics}
\frac{N_I(W_n')-n\rho_{sc}(I)}{\sqrt{\frac{1}{2\pi^2}\log n}} \underset{n \to \infty}{\to} \CN(0,1)
\end{equation}
(which we call below the CLT with numerics).

The purpose of this note is to extend these conclusions to non-Gaussian Wigner matrices.
The class of Wigner matrices covered by our results is described by the following condition.
Say that $M_n$ satisfies condition $(\C0)$ if the real part $\xi$ and the imaginary part $\tilde{\xi}$ of
$(M_n)_{ij}$ are independent and have an exponential decay: there are two constants $C$ and $C'$ such that
\begin{equation*}
\P(|\xi|\ge t^C) \le e^{-t} \quad \textrm{and} \quad \P(|\tilde{\xi}|\ge t^{C}) \le e^{-t},
\end{equation*}
for all $t \ge C'$.

Say that two complex random variables $\xi$ and $\xi'$ match to order $k$ if
\begin{equation*}
\E\left[ \Re(\xi)^m \Im(\xi)^l\right]
    =\E\left[\Re(\xi')^m \Im(\xi')^l\right]
\end{equation*}
for all $m, l \ge 0$ such that $m+l \le k$.

The following theorem is the main result of this note.

\begin{theorem} \label {theorem:main}
Let $M_n$ be a random Hermitian matrix whose entries satisfy condition $(\C0)$ and
match the corresponding entries of GUE up to order $4$. Set $W_n=\frac{1}{\sqrt{n}}M_n$.
Then, for any $y \in (-2,2)$ and $I(y)=[y,+\infty)$, setting $Y_n:= N_{I(y)}(W_n)$, we have
\begin{equation*}
\E[Y_n] =n\rho_{sc}(I(y))+o(1) \quad \textrm{and} \quad \Var(Y_n) =\Big(\frac{1}{2\pi^2}+o(1)\Big)\log n, 
\end{equation*}
and the sequence $(Y_n)$ satisfies the CLT
\begin{equation*}
\frac{Y_n -\E[Y_n] }{\sqrt{\Var(Y_n)}} \underset{n \to \infty}{\to} \CN(0,1).
\end{equation*}
\end{theorem}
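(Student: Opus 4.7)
My plan is to transfer the CLT from the GUE case (Gustavsson's theorem) to general Wigner matrices via the Tao--Vu Four Moment Theorem, in conjunction with the Erd\"os--Yau--Yin (EYY) rigidity estimates, exploiting the elementary duality
\begin{equation*}
\{Y_n \leq k\} \;=\; \{\lambda_{(n-k)}(W_n) < y\},
\end{equation*}
where $\lambda_{(1)} \leq \cdots \leq \lambda_{(n)}$ are the ordered eigenvalues of $W_n$. This identity converts a question about the counting function into one about a single order statistic, which is exactly the kind of object the Four Moment machinery is designed to handle.

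My first step is to establish the CLT with numerics $(Y_n - n\rho_{sc}(I(y)))/\sqrt{(\log n)/(2\pi^2)} \to \CN(0,1)$. Fix $t\in\R$ and set $k(t) = \lfloor n\rho_{sc}(I(y)) + t\sqrt{(\log n)/(2\pi^2)}\rfloor$. By the duality, $\P(Y_n \leq k(t)) = \P(\lambda_{(n-k(t))}(W_n) < y)$. A Taylor expansion of the classical location $\gamma_{n-k(t)}$ about $y$ produces $(y - \gamma_{n-k(t)})/\sigma_P = t + o(1)$, where $\sigma_P = \sqrt{(\log n)/(2\pi^2 n^2 \rho_{sc}(y)^2)}$ is the Gustavsson position standard deviation for $\lambda_{(n-k(t))}$ under GUE. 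Applying the Four Moment Theorem to a suitably smoothed version of $\mathbf{1}_{\lambda_{(n-k(t))} < y}$, with the smoothing error controlled by EYY rigidity (which localizes each bulk eigenvalue within $n^{-1+\epsilon}$ of its classical position), transfers Gustavsson's position CLT from GUE to $W_n$ and yields $\P(Y_n \leq k(t)) \to \Phi(t)$.

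My second step is to prove the sharp asymptotics $\E[Y_n] = n\rho_{sc}(I(y)) + o(1)$ and $\Var(Y_n) = ((2\pi^2)^{-1}+o(1))\log n$. I would write $Y_n = \sum_j G(\lambda_j) + R_n$, where $G$ is a smooth approximation to $\mathbf{1}_{[y,\infty)}$ at a small scale $\eta = n^{-1-\epsilon}$ and $R_n$ counts the eigenvalues in a window of width $2\eta$ about $y$; EYY rigidity controls the size of $R_n$. The smooth linear (respectively, quadratic) statistic $\sum_j G(\lambda_j)$ is accessible to Four Moment comparison with the corresponding GUE quantity, whose mean and variance are part of Gustavsson's theorem. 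Combining with the negligible contribution of $R_n$ produces the mean and variance for $W_n$. Slutsky's theorem together with the CLT with numerics of Step~1 then delivers the centered and normalized CLT.

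The main obstacle is the variance estimate. One cannot simply compute $\E Y_n^2$ and $(\E Y_n)^2$ to accuracy $o(\log n)$ and subtract, because each is of order $n^2$ so errors swamp the $\log n$ signal; instead one must work directly with centered quantities. The Four Moment comparison of $\E (Y_n^{\textrm{smooth}})^2$ involves $n^2$ pairs $(i,j)$, and naive per-pair errors do not close; a careful restriction to pairs with $\gamma_i,\gamma_j$ near $y$ (handling the rest deterministically via rigidity) is needed to make the aggregate error small. Bookkeeping the precise constant $1/(2\pi^2)$ through the Taylor expansions and through the position-to-counting conversion also requires care.
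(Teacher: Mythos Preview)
Your two-step strategy---first the CLT with numerics via the duality $\{Y_n\le k\}=\{\lambda_{(n-k)}<y\}$ together with the Four Moment Theorem, then the mean and variance asymptotics via EYY rigidity---is exactly the paper's. In particular, the mechanism you single out in your obstacle paragraph, namely restricting the Four Moment comparison to the poly-logarithmically many indices $i$ whose classical location $\gamma_i$ lies near $y$ and handling all other indices deterministically through rigidity, is precisely the paper's ``first class/second class'' split, which is the heart of the variance argument.

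The one place your write-up diverges is the introduction of the smooth cutoff $G$ and the remainder $R_n$. The paper avoids this: it works directly with the indicators $A_i=\mathbf{1}_{\{\lambda_i\in I\}}$ and packages the Four Moment input as a comparison of $\P(\lambda_i\in I)$ and $\P(\lambda_i\in I,\ \lambda_j\in I)$ between the two ensembles (its Proposition~3), so the smoothing is done once per eigenvalue inside that black box rather than globally on the sum. Your route via $G$ has a small gap as written: EYY rigidity localizes bulk eigenvalues only to scale $n^{-1+\epsilon}$, so on the good event you get $|R_n|\le O(n^{\epsilon})$ and hence $\Var(R_n)=O(n^{2\epsilon})$, which is not $o(\log n)$. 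To close this you would need an extra input---a Wegner-type bound, or a per-eigenvalue Four Moment comparison of $\P(|\lambda_i-y|\le\eta)$ with its GUE counterpart---at which point you are effectively re-deriving the paper's Proposition~3 and can dispense with $G$ altogether.
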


The theorem is established in the next two sections. In a first step, relying on Gustavsson's results and its extension to Wigner matrices by Tao and Vu \cite {Tao_Vu_wigner},
we establish that $(Y_n)$ satisfies the CLT with numerics \eqref {CLTnumerics}. In a second step,
we use recent results of Erd\"os, Yau and Yin \cite{EYY}
on the localization of eigenvalues in order to prove that
$\E[Y_n]$ and $\Var(Y_n)$ are close to those  of $M_n'$ (GUE)
and therefore satisfy  \eqref {Gustavsson}.

\section{CLT with numerics and eigenvalues in the bulk} 

On the basis of the CLT with numerics, Gustavsson \cite {Gustavsson}
described the Gaussian behavior of eigenvalues in the bulk of the semicircle law in the form of
\begin{equation}\label{behavior_eigenvalue}
\sqrt{\frac{4-t(i/n)^2}{2}} \frac{\lambda_i(W_n') - t(i/n)}{\frac{\sqrt{\log n}}{n}} \underset{n \to
\infty}{\to}
\CN(0,1)
\end{equation}
in distribution, where $t(x) \in [-2,2]$ is defined for $x \in [0,1]$ by
\begin{equation*}
x=\int_{-2}^{t(x)}d\rho_{sc}(t)=\frac{1}{2\pi}\int_{-2}^{t(x)}\sqrt{4-x^2} dx.
\end{equation*} 
More informally,
$\lambda_i(W_n') \approx t(i/n) + \CN( 0, \frac{2\log n}{(4-t(i/n)^2)n^2} )$.
This is achieved by the tight relation between eigenvalues and the counting function
expressed by the elementary equivalence, for $I(y) = [y,+ \infty)$, $y \in \mathbb{R}$,
\begin{equation}\label {magic_formula}
\ N_{I(y)}(W_n) \le n-i \quad 
\textrm{if and only if} \quad \lambda_i \le y.
\end{equation}

The result \eqref{behavior_eigenvalue} was extended in \cite{Tao_Vu_wigner} to large families
of Wigner matrices satisfying condition $(\C0)$ by means of the Four Moment
Theorem (see \cite{Tao_Vu_wigner} and \cite{Tao_Vu_wigner_edge}). Now using
the reverse strategy based on \eqref {magic_formula}, \eqref{behavior_eigenvalue}
may be shown to imply back the CLT with numerics \eqref {CLTnumerics} for
Wigner matrices whose entries match those of the GUE up to order $4$. We provide some details in
this regard relying on the following simple consequence of the
Four Moment Theorem.

\begin{proposition} \label{proposition:TV}
Let $M_n$ and $M_n''$ be two random matrices satisfying condition $(\C0)$ such that their entries match
up to
order $4$.
There exists $c>0$ such that, if $n$ is large enough, for any $y \in (-2, 2)$ and any $(i,j) \in
\{1,\dots,n\}^2$, if $I(y)=[y,+ \infty)$,
\begin{equation*}
\big |\P (\lambda_i \in I(y) )- \P(\lambda_i'' \in I(y)) \big | \le n^{-c} ,
\end{equation*}
and
\begin{equation*}
\big |\P (\lambda_i \in I(y)  \wedge \lambda_j \in I(y) )
     - \P(\lambda_i'' \in I(y)  \wedge \lambda_j'' \in I(y)) \big | \le n^{-c} .
\end{equation*}
% \begin{equation*}
%  |\P (\lambda_i \in I(y)  \wedge \lambda_j \notin I(y) )- \P(\lambda_i' \in I(y)  \wedge \lambda_j \notin
% I(y)) | \le n^{-c} .
% \end{equation*}
% \begin{equation*}
%  |\P (\lambda_i \notin I(y)  \wedge \lambda_j \notin I(y) )- \P(\lambda_i' \notin I(y)  \wedge \lambda_j
% \notin I(y)) | \le n^{-c} .
% \end{equation*}
\end{proposition}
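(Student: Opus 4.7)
The strategy is the standard smoothing argument: approximate the indicator $\mathbf 1_{[y,+\infty)}$ by smooth functions, apply the Four Moment Theorem to these smoothings, and control the approximation error via the local semicircle law.

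Fix a small $\delta > 0$ (to be chosen) and set $\eta := n^{-1-\delta}$. Pick smooth non-decreasing sandwich functions $\chi^\pm_\eta \colon \BBR \to [0,1]$ with $\chi^-_\eta \le \mathbf 1_{[y,+\infty)} \le \chi^+_\eta$, such that $\chi^+_\eta - \chi^-_\eta$ is supported in $[y-\eta, y+\eta]$ and $\|\chi^{\pm(k)}_\eta\|_\infty = O(\eta^{-k})$. Then
\begin{equation*}
\BE\bigl[\chi^-_\eta(\lambda_i)\bigr] \;\le\; \Pr\bigl(\lambda_i \in I(y)\bigr) \;\le\; \BE\bigl[\chi^+_\eta(\lambda_i)\bigr],
\end{equation*}
and analogously for $\lambda_i''$. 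The Tao--Vu Four Moment Theorem \cite{Tao_Vu_wigner}, applied to the smooth one-variable test functions $\chi^\pm_\eta$ (whose appropriately scaled derivatives grow only polynomially in $n$ at a rate controlled by $\delta$), yields, for $\delta$ sufficiently small,
\begin{equation*}
\bigl|\BE[\chi^\pm_\eta(\lambda_i)] - \BE[\chi^\pm_\eta(\lambda_i'')]\bigr| \;\le\; n^{-c_1}
\end{equation*}
for some $c_1 = c_1(\delta) > 0$. For the sandwich gap, the inclusion $\{\lambda_i \in [y-\eta, y+\eta]\} \subset \{N_{[y-\eta,y+\eta]}(W_n) \ge 1\}$ together with Markov's inequality and the local semicircle estimate of Erd\H os--Yau--Yin \cite{EYY} (available under $(\C0)$ for $y$ in the bulk) yields
\begin{equation*}
\Pr\bigl(\lambda_i \in [y-\eta, y+\eta]\bigr) \;\le\; \BE\bigl[N_{[y-\eta, y+\eta]}(W_n)\bigr] \;\le\; C n\eta \;=\; C n^{-\delta},
\end{equation*}
and the same bound for $W_n''$. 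Combining the three ingredients via the triangle inequality gives the single-index claim with $c := \min(c_1,\delta)/2$.

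The joint estimate follows from the identical argument applied to the two-variable test function $G(s,t) := \chi^\pm_\eta(s)\chi^\pm_\eta(t)$ and the multi-eigenvalue form of the Four Moment Theorem; the resulting sandwich error is bounded by $\Pr(\lambda_i \in [y-\eta,y+\eta]) + \Pr(\lambda_j \in [y-\eta, y+\eta]) = O(n^{-\delta})$. The main technical point is the balancing act in the choice of $\delta$: making $\eta$ smaller improves the sandwich resolution but blows up the derivatives of $\chi^\pm_\eta$, and one needs $\delta$ small enough that the four-moment matching still absorbs this $\eta^{-k}$ growth and delivers a power saving. This is precisely the regime where having \emph{four} matching moments (rather than three) is essential; once $\delta$ is fixed accordingly, both the Four-Moment error and the sandwich error are simultaneously of the form $n^{-c}$.
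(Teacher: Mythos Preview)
The paper does not supply a proof of this proposition; it is simply asserted as a consequence of the Tao--Vu Four Moment Theorem \cite{Tao_Vu_wigner}. Your smoothing-plus-Four-Moment approach is the standard route and is correct in outline, but there is a genuine gap in your handling of the sandwich error. You assert that the local semicircle estimate of \cite{EYY} gives $\E\bigl[N_{[y-\eta,y+\eta]}(W_n)\bigr] \le Cn\eta$ at scale $\eta = n^{-1-\delta}$. It does not: the local law and the rigidity theorem control eigenvalue counts only down to scale $n^{-1}(\log n)^{O(\log\log n)}$, where they give $N_I = n\rho_{sc}(I) + O\bigl((\log n)^{O(\log\log n)}\bigr)$. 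For an interval of length $2n^{-1-\delta}$ this yields at best $\E[N_I] = O\bigl((\log n)^{O(\log\log n)}\bigr)$, so the Markov step $\P(\lambda_i\in I)\le \E[N_I]$ produces no decay whatsoever. Under condition $(\C0)$ alone (which permits discrete entry distributions), a Wegner-type bound $\E[N_I]\le Cn|I|$ at sub-microscopic scales is not available.

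The repair is to bound $\P(\lambda_i\in[y-\eta,y+\eta])$ for the \emph{individual} index $i$ rather than through the crude sum $\E[N_I]$. Majorize $\mathbf 1_{[y-\eta,y+\eta]}$ by a smooth bump $\psi$ supported in $[y-2\eta,y+2\eta]$ with $\|\psi^{(k)}\|_\infty = O(\eta^{-k})$, apply the Four Moment Theorem once more to compare $\E[\psi(\lambda_i)]$ with the GUE quantity $\E[\psi(\lambda_i')]$, and then invoke the GUE-specific fact that the density of each ordered eigenvalue $\lambda_i'$ is $O(n)$ in the bulk (from Gustavsson's Gaussian asymptotics or the bounded one-point kernel). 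This gives $\P(\lambda_i\in[y-\eta,y+\eta])\le Cn\eta + n^{-c_1} = O(n^{-c})$, and the rest of your argument goes through. Note that this route uses matching to GUE up to order four, which is exactly the hypothesis under which the proposition is actually invoked in Lemma~\ref{lemma}; in the full generality of the proposition as stated (with $M_n$ and $M_n''$ matching only each other) one would need a separate non-concentration input.
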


As announced, we  would like to show that the behavior of eigenvalues in the bulk
\eqref{behavior_eigenvalue} extended to Wigner matrices leads to the CLT with numerics for such matrices,
namely, 
\begin{equation} \label {CLTWigner}
\frac{N_{I(y)}(W_n)-n\rho_{sc}(I(y))}{\sqrt{\frac{1}{2\pi^2}\log n}} \underset{n \to \infty}{\to} \CN(0,1),
\end{equation}
in distribution for Wigner matrices $W_n$ satisfying $(\C0)$. To prove this, observe that for every
$x \in \BBR$.
\begin{equation*}
\P\Big(\frac{N_{I(y)}(W_n)-n\rho_{sc}(I(y))}{\sqrt{\frac{1}{2\pi^2}\log n}} \le x\Big) 
  = \P(N_{I(y)}(W_n) \le  n-i_n)
\end{equation*}
where $i_n=n\rho_{sc}((-\infty,y])-x\sqrt{\frac{1}{2\pi^2}\log n}$. Then, by
\eqref {magic_formula},
\begin{align*}
\P\Big(\frac{N_{I(y)}(W_n)-n\rho_{sc}(I(y))}{\sqrt{\frac{1}{2\pi^2}\log n}} \le x\Big) & = 
\P(\lambda_{i_n}(W_n)
\le y)\\
& =  \P\Big(\sqrt{\frac{4-t(i_n/n)^2}{2}} \frac{\lambda_{i_n}(W_n)-t(i_n/n)}{\frac{\sqrt{\log n}}{n}} \le
x_n\Big),
\end{align*}
where $x_n=\sqrt{\frac{4-t(i_n/n)^2}{2}}\frac{y-t(i_n/n)}{\frac{\sqrt{\log n}}{n}}$.
Now $\frac{i_n}{n} \to \rho_{sc}((-\infty,y]) \in (0,1)$.
Furthermore, $x_n \to x$ since
\begin{eqnarray*}
t(i_n/n) & = & t\bigg(\rho_{sc}((-\infty,y])-\frac{x}{n}\sqrt{\frac{1}{2\pi^2}\log n}\bigg)\\
& = &
t\Big(\rho_{sc}((-\infty,y])\Big)-t'\Big(\rho_{sc}((-\infty,y])\Big)\frac{x}{n}\sqrt{\frac{1}{2\pi^2}
\log
n}+o\Big(\frac{\sqrt{\log n}}{n}\Big)\\
& = & y-x\sqrt{\frac{2}{4-y^2}}\frac{\sqrt{\log n}}{n}+o\Big(\frac{\sqrt{\log n}}{n}\Big).
\end{eqnarray*}
Hence $\frac{y-t(i_n/n)}{\frac{\sqrt{\log n}}{n}}=x\sqrt{\frac{2}{4-y^2}}+o(1)$, from which $x_n
\to x$.

Applying \eqref{behavior_eigenvalue} (extended to Wigner matrices), we obtain that  
$$\P\Big(\sqrt{\frac{4-t(i_n/n)^2}{2}} \frac{\lambda_{i_n}(W_n)-t(i_n/n)}{\frac{\sqrt{\log n}}{n}} \le
x_n\Big)
\underset{n \to \infty}{\to} \P(X \le x), $$
where $X \sim \CN(0,1)$, implying \eqref {CLTWigner}.

\section{ Estimating the mean and the variance of $Y_n$ }

To reach the CLT of Theorem \ref {theorem:main} from the CLT with numerics \eqref {CLTWigner},
it is necessary to suitably control the expectation and variance $\E[Y_n]$ and $\Var(Y_n)$
of the eigenvalue counting function, and to show that their behaviors
are identical to the ones for GUE matrices. The direct use of the Four Moment Theorem is unfortunately
not enough to this purpose since it only provides proximity on a small number of eigenvalues.
But recent results of Erd\"os, Yau and Yin \cite{EYY}, presented in the following statement,
describe strong localization of the eigenvalues of Wigner matrices which
provides the additional step necessary to complete the argument.

\begin{theorem}  \label{theorem:EYY}
Let $M_n$ be a random Hermitian matrix whose entries satisfy condition $(\C0)$.
There is a constant $C >0$ such that, for any $i \in \{1,\dots,n\}$,
\begin{equation*}
\P( |\lambda_i - t(i/n)| \ge (\log n)^{C\log \log n} \min(i,n-i+1)^{-1/3}n^{-2/3} ) \le n^{-3} .
\end{equation*}
\end{theorem}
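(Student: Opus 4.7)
The strategy is to prove the rigidity bound via a \emph{local semicircle law} for Wigner matrices, from which control on individual eigenvalue positions follows through the counting-function identity \eqref{magic_formula}. The core quantity is the Stieltjes transform $m_n(z) = \frac{1}{n}\tr(W_n - z)^{-1}$ at a spectral parameter $z = E + i\eta$ with $E$ close to $[-2,2]$, and the goal is the high-probability estimate
\[
\bigl|m_n(E + i\eta) - m_{sc}(E + i\eta)\bigr| \le \frac{(\log n)^{C \log \log n}}{n\eta},
\]
valid (with appropriate edge adjustments) down to scales $\eta$ barely above $1/n$.

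\textbf{Self-consistent equation.} First I would write down the Schur complement formula for the resolvent $G(z) = (W_n - z)^{-1}$. The diagonal entries satisfy
\[
G_{ii}(z) = \bigl(-z - m_n^{(i)}(z) + Z_i \bigr)^{-1},
\]
where $m_n^{(i)}$ is the Stieltjes transform of the $(n-1)\times(n-1)$ minor obtained by removing the $i$-th row and column, and $Z_i$ is a quadratic-form fluctuation. Under $(\C0)$, a Hanson--Wright type large deviation estimate with sub-exponential tails gives $|Z_i| \lesssim \Psi(z) := \sqrt{\Im m_{sc}(z)/(n\eta)} + 1/(n\eta)$, up to a $(\log n)^{C\log\log n}$ factor that tracks the sub-exponential moments allowed by $(\C0)$. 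Averaging the Schur relation and using $m_n^{(i)} = m_n + O(1/(n\eta))$, one obtains $|m_n^2 + z m_n + 1| \ll \Psi$, and then the stability of the semicircle's quadratic equation transfers this to the pointwise bound on $m_n - m_{sc}$.

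\textbf{Bootstrap and transfer.} This bound is propagated from a macroscopic scale, where a crude a priori estimate holds, down to $\eta$ just above $n^{-1}$, by a continuity/bootstrap on $\eta$: at each scale the stability analysis is reapplied with the improved a priori bound, and the iteration closes thanks to the $(\log n)^{C \log \log n}$ slack. The resulting local law yields, through a Helffer--Sjöstrand contour integration, the counting-function estimate
\[
\bigl|N_{(-\infty,E]}(W_n) - n \rho_{sc}((-\infty, E])\bigr| \le (\log n)^{C \log \log n}
\]
with probability at least $1 - n^{-3}$, uniformly in $E$. The eigenvalue rigidity then follows from \eqref{magic_formula}: for $y = t(i/n) \pm (\log n)^{C\log\log n}\min(i,n-i+1)^{-1/3}n^{-2/3}$, the density relation $\rho_{sc}(t(i/n)) \asymp (\min(i,n-i+1)/n)^{1/3}$ ensures that $|n\rho_{sc}((-\infty,y]) - i|$ exceeds the counting-function error, forcing $\lambda_i$ into the prescribed window.

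\textbf{Main obstacle.} The hard part is pushing the local law down to the nearly optimal scale near the spectral edge, where $m'_{sc}$ blows up at $\pm 2$ and the quadratic equation becomes unstable. This requires measuring the stability of the self-consistent equation in terms of the distance $\kappa = |E| - 2$ to the edge, with the optimal estimate reading $|m_n - m_{sc}| \lesssim \Psi/\sqrt{\kappa + \eta}$, which is precisely the source of the $\min(i, n-i+1)^{-1/3}$ factor in the final rigidity window. A secondary technical nuisance is keeping track of the $(\log n)^{\log \log n}$ loss inherited from the sub-exponential (rather than sub-Gaussian) moment assumption in $(\C0)$; a Gaussian assumption would yield a cleaner polylogarithmic rate.
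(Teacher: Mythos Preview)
The paper does not prove Theorem~\ref{theorem:EYY} at all: it is quoted from Erd\H{o}s, Yau and Yin \cite{EYY} as a black-box input, and the authors' own work consists in combining this rigidity statement with Gustavsson's GUE asymptotics and the Tao--Vu Four Moment Theorem to obtain Lemma~\ref{lemma}. So there is no ``paper's own proof'' to compare against.

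That said, what you have written is a faithful high-level outline of the actual argument in \cite{EYY}: the Schur-complement self-consistent equation for the resolvent, large-deviation bounds on the fluctuation term under sub-exponential tails (which is where the $(\log n)^{C\log\log n}$ factor enters), the continuity/bootstrap in $\eta$ down to the optimal scale, the Helffer--Sj\"ostrand transfer to the counting function, and finally the conversion to eigenvalue location via the square-root vanishing of $\rho_{sc}$ at the edge producing the $\min(i,n-i+1)^{-1/3}$ factor. Your identification of the edge instability ($m_{sc}'$ blowing up, requiring the $\sqrt{\kappa+\eta}$ denominator) as the main technical obstacle is also accurate. One small caveat: turning this outline into a full proof is a substantial paper in its own right (the bootstrap near the edge and the fluctuation-averaging mechanism that improves $\Psi$ to $\Psi^2$ are delicate), so in the context of the present note the correct move is exactly what the authors do --- cite \cite{EYY} and use the conclusion.
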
 
Note that if $n\varepsilon \le i \le (1-\varepsilon)n$ for some small $\varepsilon>0$, then
$\min(i,n-i+1) \ge n\varepsilon$ so that
\begin{equation}\label{localization}
\P \Big (|\lambda_i - t(i/n)| \ge n^{-1}\varepsilon^{-1/3}(\log n)^{C\log \log n} \Big ) \le n^{-3}.
\end{equation}

The next lemma presents the main conclusion on
the expectation and variance of the eigenvalue counting function,
extending Gustavsson's conclusion \eqref {Gustavsson} for the GUE to
Wigner matrices of the class $(\C0)$.
Once this lemma is established, Theorem \ref {theorem:main} will follow.

\begin{lemma} \label {lemma}
Set $W_n=\frac{1}{\sqrt{n}}M_n$,  $I(y)=[y,+\infty)$ where $ y \in (-2,2)$, and
$Y_n=N_{I(y)}(W_n)$. Then
\begin{equation*}
\E[Y_n]=n\rho_{sc}(I(y))+o(1) \quad \textrm{and} \quad \Var(Y_n)=\Big(\frac{1}{2\pi^2}+o(1)\Big)\log n.
\end{equation*}
\end{lemma}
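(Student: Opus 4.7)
The plan is to compare $Y_n$ with its GUE analogue $Y_n' = N_{I(y)}(W_n')$, for which Gustavsson's estimates \eqref{Gustavsson} hold, by combining the eigenvalue localization of Theorem \ref{theorem:EYY} with the Four Moment comparison of Proposition \ref{proposition:TV}. The idea is to split the indicators $\mathbf{1}_{\lambda_i \in I(y)}$ into a small ``ambiguous'' group of indices where $t(i/n)$ is close to $y$ and Proposition \ref{proposition:TV} is used, and a much larger ``unambiguous'' complement on which localization makes each indicator essentially deterministic. The main obstacle is to keep the ambiguous set $A$ small enough that summing the $n^{-c}$ error from Proposition \ref{proposition:TV} over its $O(|A|^2)$ pairs still leaves $o(1)$; this is exactly what the sub-polynomial localization scale of Theorem \ref{theorem:EYY} delivers.

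To set this up, I fix $\varepsilon > 0$ small enough (depending on $y$) that $t(i/n)$ is separated from $y$ by a positive constant for every $i \notin [\varepsilon n, (1-\varepsilon)n]$, and set $r_n = n^{-1}\varepsilon^{-1/3}(\log n)^{C\log\log n}$. By \eqref{localization}, together with the edge version derived directly from Theorem \ref{theorem:EYY} (which suffices at the edge since there the deviation $(\log n)^{C\log\log n}n^{-2/3}$ is much smaller than the constant separation from $y$), for every index $i$ with $|t(i/n) - y| > r_n$ the indicator $\mathbf{1}_{\lambda_i \in I(y)}$ coincides with the deterministic value $\mathbf{1}_{t(i/n) > y}$ with probability at least $1 - n^{-3}$; in particular $\Var(\mathbf{1}_{\lambda_i \in I(y)}) \le n^{-3}$. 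Since the spacing of the $t(i/n)$'s is of order $1/n$ in the bulk, the ambiguous set $A := \{i \in [\varepsilon n, (1-\varepsilon)n] : |t(i/n) - y| \le r_n\}$ has cardinality $|A| = O((\log n)^{C\log\log n})$, sub-polynomial in $n$.

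For the mean, I decompose $\E[Y_n] - \E[Y_n'] = \sum_i \bigl(\P(\lambda_i \in I(y)) - \P(\lambda_i' \in I(y))\bigr)$: each index $i \notin A$ contributes $O(n^{-3})$ (both probabilities lie within $n^{-3}$ of $0$ or $1$), hence $O(n^{-2})$ in total, while each $i \in A$ contributes $O(n^{-c})$ by Proposition \ref{proposition:TV}, totalling $O(|A| n^{-c}) = o(1)$. Together with \eqref{Gustavsson} this gives $\E[Y_n] = n\rho_{sc}(I(y)) + o(1)$. For the variance I write $\Var(Y_n) - \Var(Y_n') = \sum_{i,j}\bigl[\operatorname{Cov}(\mathbf{1}_{\lambda_i \in I(y)},\mathbf{1}_{\lambda_j \in I(y)}) - \operatorname{Cov}(\mathbf{1}_{\lambda_i' \in I(y)},\mathbf{1}_{\lambda_j' \in I(y)})\bigr]$ and handle three cases. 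For pairs $(i,j)$ with both indices in $A$, Proposition \ref{proposition:TV} applied to the joint and the marginal probabilities gives $O(n^{-c})$ per pair, totalling $O(|A|^2 n^{-c}) = o(1)$. For pairs with exactly one index in $A$, Cauchy--Schwarz combined with the bound $\Var(\mathbf{1}_{\lambda_j \in I(y)}) \le n^{-3}$ on the unambiguous factor bounds each covariance (for both ensembles) by $O(n^{-3/2})$, summing to $O(|A| \cdot n \cdot n^{-3/2}) = o(1)$. For pairs with both indices outside $A$, Cauchy--Schwarz gives $O(n^{-3})$ per pair and $O(n^{-1})$ in total. Combined with \eqref{Gustavsson}, this yields $\Var(Y_n) = \bigl(\frac{1}{2\pi^2} + o(1)\bigr)\log n$, completing the proof.
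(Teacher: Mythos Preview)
Your proof is correct and follows essentially the same approach as the paper: compare with the GUE via Gustavsson's \eqref{Gustavsson}, use the Erd\H{o}s--Yau--Yin localization to show that all but $O((\log n)^{C\log\log n})$ indices have essentially deterministic indicators, and apply Proposition~\ref{proposition:TV} on the remaining small set. The only cosmetic differences are that the paper bounds the mixed covariances directly by $O(n^{-3})$ (via $\E[A_iA_j]\le\E[A_j]$ when $\E[A_j]\le n^{-3}$, and complementation otherwise) rather than through Cauchy--Schwarz, and handles the edge indices by a monotonicity argument ($\lambda_i\le\lambda_j$) instead of invoking Theorem~\ref{theorem:EYY} there directly.
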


\begin{proof}
By Gustavsson's results \eqref {Gustavsson} therefore, if
$Y_n'$ denotes $N_{I(y)}(W_n')$ in the case $M_n'$ is GUE,
\begin{equation*}
\E[Y_n'] =n\rho_{sc}(I(y))+O \Big (\frac{\log n}{n} \Big )  \quad \textrm{and} \quad \Var(Y_n')=
\Big(\frac{1}{2\pi^2}+o(1)\Big)\log n.
\end{equation*}
Hence, to establish Lemma \ref {lemma},  it suffices to show that 
$ \E[Y_n] = E[Y_n'] +o(1)$ and $ \Var(Y_n) = \Var(Y_n')  +o(1) $.
Below, we only deal with the variance, the argument for the expectation being
similar and actually simpler.

Set  $A_i = {\bf 1}_{\{ \lambda_i \in I \}} $, for $i \in \{1,\dots,n\}$. 
Notice that 
\begin{equation*}
|\Var (Y_n) - \Var (Y_n') | \le \sum_{1\le i,j\le n } |( \E[A_iA_j] - \E[A_i] \E[A_j] ) -( \E[A_i' A_j'] -
\E[A_i'] \E[A_j'] ) |.
\end{equation*}
Call an index $i$ {\it first class}  if $\E[A_i] \ge 1 -n^{-3}$ or $ \le n^{-3} $ and {\it second class} 
otherwise.

Note that if $j$ is first class, then, for all $i \in \{1, \dots,n\}$,
\begin{equation*}
| \E[A_iA_j] - \E[A_i] \E[A_j] | = O(n^{-3} ).
\end{equation*}
Indeed, if $\E[A_j] \le n^{-3}$, then both terms between the absolute value signs are
$O(n^{-3})$, so that $| \E[A_iA_j] - \E[A_i] \E[A_j] | = O(n^{-3} ).$ The other case can be brought back to
this case by the identity
\begin{equation*}
| \E[A_iA_j] - \E[A_i] \E[A_j] | = |\E[B_iB_j] - \E[B_i] \E[B_j] | ,
\end{equation*}
where $B_i$ is the complement of $A_i$.
Consequently,
\begin{equation}\label{class_1}
\sum_{\substack{i \ \textrm{or} \ j \\ 1^{\textrm{st}} \ \textrm{class} }}
|\big(\E[A_iA_j]-\E[A_i]\E[A_j]\big)-\big(\E[A_i'A_j']-\E[A_i']\E[A_j'] \big)|=
 O(n^{-1}).
\end{equation}

Theorem \ref{theorem:EYY} shows that there are 
only $O((\log n)^{C\log \log n})$ second class indices. Indeed, set $\eta_n=n^{-1}\varepsilon^{-1/3}(\log
n)^{C\log \log n}$ and suppose first that $i \in \{1, \dots, n\}$ is
such
that $t(i/n) < y-\eta_n$:
\begin{itemize}
\item if $t(i/n) > t(\varepsilon)$, \eqref{localization} is true for $W_n$. Then
\begin{equation*}
\P(\lambda_i \in I_n) \le \P \Big (|\lambda_i - t(i/n)| \ge \eta_n \Big )
\le n^{-3}.
\end{equation*}
\item if $t(i/n) < t(\varepsilon)$, choose $j$ such that $t(\varepsilon) < t(j/n) <
y-\eta_n$ (take $\varepsilon$ small enough and $n$ large
enough such that
there is such a $j$). Then $\lambda_i \le \lambda_j$ and
$\P(\lambda_i \in I_n)=\P(\lambda_i \ge y) \le \P(\lambda_j \ge y)=\P(\lambda_j \in
I_n) \le n^{-3}$.
\end{itemize}
Similarly one can show that if $i \in \{1, \dots, n\}$ is such that $t(i/n) >
y+\eta_n$, then $i$
is first class.

As a consequence of this discussion, $i \in \{1, \dots, n\}$
can only be second class if
$y-\eta_n<t(i/n)<y+\eta_n$.
We need to count these possible $i$'s.
By definition of $t(i/n)$, $i=\frac{n}{2\pi}\int_{-2}^{t(i/n)}\sqrt{4-x^2}dx$. Thus, 
\begin{equation*}
\frac{n}{2\pi}\int_{-2}^{y-\eta_n}\sqrt{4-x^2}dx
\le i \le
\frac{n}{2\pi}\int_{-2}^{y+\eta_n}\sqrt{4-x^2}dx.
\end{equation*}
In this case $i$ belongs to an interval of length 
\begin{equation*}
\frac{n}{2\pi}\int_{y-\eta_n}^{y+\eta_n}\sqrt{4-x^2}dx \leq  \frac{2}{\pi
\varepsilon^{1/3}}(\log n)^{C\log \log n}.
\end{equation*}

Therefore, there are at most
$\frac{2}{\pi \varepsilon^{1/3}}(\log n)^{C\log \log n}+1=O((\log n)^{C\log \log n})$ second class
$i$'s.

Next, by  Proposition \ref{proposition:TV}, it is easily seen that if both $i,j$ are second class, then
\begin{equation*}
|( \E[A_iA_j] - \E[A_i] \E[A_j] ) -( \E[A_i' A_j'] - \E[A_i'] \E[A_j'] ) |= O(n^{-c})
\end{equation*}
for some positive constant $c$. Since the number of such pairs is $O((\log n)^{2C\log \log n})$, we
have
\begin{equation}\label{class_2}
\sum_{\substack{i \ \textrm{and} \ j \\ 2^{\textrm{nd}} \
\textrm{class}}}\hspace{-7pt}
|\big(\E[A_iA_j]-\E[A_i]\E[A_j]\big)-\big(\E[A_i'A_j']-\E[A_i']\E[A_j']\big)|=
O(n^{-c}(\log n)^{2C\log \log n}).
\end{equation}

To conclude,
\begin{align*}
|\Var (Y_n) - \Var (Y_n') | & \le \hspace{-5pt} \sum_{\substack{i \ \textrm{or} \ j \\ 1^{\textrm{st}} \
\textrm{class} }} \hspace{-5pt}
|\big(\E[A_iA_j]-\E[A_i]\E[A_j]\big)-\big(\E[A_i'A_j']-\E[A_i']\E[A_j'] \big)|\\
& + \hspace{-5pt} \sum_{\substack{i \
\textrm{and} \ j \\ 2^{\textrm{nd}} \
\textrm{class}}} \hspace{-5pt} |\big(\E[A_iA_j]-\E[A_i]\E[A_j]\big)-\big(\E[A_i'A_j']-\E[A_i']\E[A_j']\big)|,
\end{align*}
so that \eqref{class_1} and \eqref{class_2} lead to
\begin{equation*}
|\Var (Y_n) - \Var (Y_n')| \le O(n^{-1})+O(n^{-c}(\log n)^{2C\log \log n})=o(1),
\end{equation*}
as claimed. This shows that $\Var (Y_n)=\Big(\frac{1}{2\pi^2}+o(1)\Big)\log n$. As mentioned
earlier, it may be shown similarly that
$\E[Y_n]=n\rho_{sc}(I(y))+o(1)$ and the proof of Lemma \ref {lemma} is thus complete.
\end{proof}

\section{About real Wigner matrices}
In this section, we briefly indicate how the preceding results for Hermitian random matrices
may be stated similarly for real Wigner symmetric matrices. To this task, we follow the same scheme
of proof, relying in particular on the corollary of Tao and Vu Four Moment Theorem (Proposition
\ref{proposition:TV}) which also holds
in the real case (cf. \cite{O_Rourke}).

Real Wigner matrices are random symmetric matrices $M_n$ of size $n$ such that, for $i<j$, $(M_n)_{ij}$ are
iid, with mean $0$ and variance $1$, $(M_n)_{ii}$ are iid with mean $0$ and variance $2$. As in the complex
case, an important example of real Wigner matrices is the case where the entries are Gaussian, giving
rise to the so-called Gaussian Orthogonal Ensemble (GOE).

The main issue is actually to establish first the conclusions for the GOE. This has been suitably developed
by O'Rourke in \cite{O_Rourke} by means of interlacing formulas (cf. \cite{FR}).

\begin{theorem}[Forrester-Rains]
The following relation holds between matrix ensembles:
\begin{equation*}
{\rm GUE}_n = {\rm even} ({\rm GOE}_n\cup {\rm GOE}_{n+1}).
\end{equation*}
\end{theorem}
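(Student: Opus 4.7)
The plan is to match the joint eigenvalue densities on the two sides of the identity. Recall that the ordered eigenvalues of $\mathrm{GOE}_k$ have density proportional to $\prod_{i<j}(\lambda_j-\lambda_i)\exp(-\tfrac14\sum_i\lambda_i^2)$ on $\{\lambda_1<\cdots<\lambda_k\}$, while those of $\mathrm{GUE}_n$ have density proportional to $\prod_{i<j}(\nu_j-\nu_i)^2\exp(-\tfrac12\sum_i\nu_i^2)$. The heuristic is transparent: the two GOE Gaussian weights add up to exactly the GUE weight, and one expects a product of two first-power Vandermondes, over $n$ and $n+1$ variables, to reorganize after decimation into a squared Vandermonde over $n$ variables. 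The proof must promote this heuristic to an exact equality of densities.

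Concretely, let $\lambda\in\R^n$ and $\mu\in\R^{n+1}$ be independent ordered eigenvalues drawn from $\mathrm{GOE}_n$ and $\mathrm{GOE}_{n+1}$, and let $\alpha_1<\cdots<\alpha_{2n+1}$ be the sorted superposition with even subsequence $\beta=(\alpha_2,\alpha_4,\dots,\alpha_{2n})$ and odd subsequence $\gamma=(\alpha_1,\alpha_3,\dots,\alpha_{2n+1})$. I would compute the density of $\beta$ in two stages: first, sum the joint density of $(\lambda,\mu)$ over the $\binom{2n+1}{n}$ labellings assigning $\alpha$-positions to either $\lambda$-type or $\mu$-type (a purely algebraic step whose output is an explicit polynomial in the $\alpha_k$'s, symmetric within each of the two colour classes, times the common Gaussian weight $\exp(-\tfrac14\sum_k\alpha_k^2)$); second, integrate out the $n+1$ odd variables on the interlacing region
\begin{equation*}
\gamma_1<\beta_1<\gamma_2<\beta_2<\cdots<\beta_n<\gamma_{n+1}.
\end{equation*}

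The main technical obstacle is the resulting multidimensional integral. I would handle it via the de~Bruijn / Andr\'eief--Cauchy--Binet machinery: express the relevant alternating polynomial factor as a determinant, cut the interlacing domain into the product of intervals $(\beta_{i-1},\beta_i)$ with $\beta_0=-\infty$ and $\beta_{n+1}=+\infty$, and reassemble the one-dimensional Gaussian monomial integrals $\int_{\beta_{i-1}}^{\beta_i}\gamma^{k}e^{-\gamma^2/4}d\gamma$ into a new determinant in the $\beta$'s. Column reduction, exploiting the antisymmetry of the error-function primitives under reflection, identifies this determinant with a constant multiple of $\prod_{i<j}(\beta_j-\beta_i)\exp(-\tfrac14\sum_i\beta_i^2)$. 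Multiplying by the surviving GOE$_n$ factor $\prod_{i<j}(\beta_j-\beta_i)\exp(-\tfrac14\sum_i\beta_i^2)$ inherited from the $\lambda$-marginal produces exactly $\prod_{i<j}(\beta_j-\beta_i)^2\exp(-\tfrac12\sum_i\beta_i^2)$, the $\mathrm{GUE}_n$ density. The overall normalization is then fixed by matching the known partition functions of $\mathrm{GOE}_n$, $\mathrm{GOE}_{n+1}$ and $\mathrm{GUE}_n$; all remaining work — sign bookkeeping in the labelling sum and the de Bruijn determinant evaluation — is routine but is where the real combinatorial content of the theorem resides.
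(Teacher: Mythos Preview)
The paper does not prove this theorem; it is quoted from Forrester and Rains and used as a black box to transfer Gustavsson's GUE asymptotics to the GOE. There is no proof in the paper to compare against.

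That said, your density-matching outline is the standard route to such superposition/decimation identities, and the two-stage architecture (sum over colourings of the merged sequence, then integrate out the odd-position variables) is the right shape. But the narration contains a genuine confusion. Once you sum over all $\binom{2n+1}{n}$ labellings, the colour structure is erased: the output is a single polynomial in $\alpha_1,\dots,\alpha_{2n+1}$ that is symmetric in \emph{all} $2n+1$ variables, not ``symmetric within each of the two colour classes'', and there is certainly no ``surviving $\mathrm{GOE}_n$ factor inherited from the $\lambda$-marginal'' waiting to be multiplied in at the end --- the $\lambda$'s and $\mu$'s have been completely mixed. The squared Vandermonde does not arise by pairing a leftover $\lambda$-Vandermonde with an integrated $\mu$-Vandermonde. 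The actual combinatorial core is a closed-form evaluation of the colouring sum
\begin{equation*}
\sum_{|S|=n}\ \prod_{\substack{i<j\\ i,j\in S}}(\alpha_j-\alpha_i)\ \prod_{\substack{i<j\\ i,j\notin S}}(\alpha_j-\alpha_i),
\end{equation*}
which factors (via a determinant/Pfaffian identity) into a product with a specific parity structure in the indices; it is \emph{this} factorisation that, after the Gaussian integration over the odd positions, collapses to the $\mathrm{GUE}_n$ density. You have named the right tools (de~Bruijn, Cauchy--Binet, interlacing integrals) but wired them in the wrong order; as written, the computation would not close.
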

This statement can be interpreted in the following way. Take two independent matrices
from the GOE, one of size $n$ and
the other of size $n+1$. If we surimperpose the $2n+1$ eigenvalues on the
real line and then take the $n$ even
ones, they have the same distribution as the eigenvalues of a $n \times n$ matrix from the GUE.

Let $I$ be an interval in $\BBR$. Let $M_n^{\BBR}$ be a GOE matrix and $M_n^{\BBC}$ be a GUE matrix.
$W_n^{\BBR}$ and $W_n^{\BBC}$ are the corresponding normalized matrices. The preceding interlacing
formula leads to
\begin{itemize}
\item $\E[N_{I}(W_n^{\BBR})]=\E[N_{I}(W_n^{\BBC})]+O(1)$
\item $\Var(N_{I}(W_n^{\BBR}))=2\Var(N_{I}(W_n^{\BBC}))+O(1)$, if $\Var(N_{I}(W_n^{\BBC})) \underset{n \to
\infty}{\to} \infty$.
\end{itemize}

Relying on this result and on the GUE case, O'Rourke proved the following theorem:
\begin{theorem}\label{GOE}
Let $M_n^{\BBR}$ be a GOE matrix. Set $W_n^{\BBR}=\frac{1}{\sqrt{n}}M_n^{\BBR}$.
Then, for any $y \in (-2,2)$ and $I(y)=[y,+\infty)$, setting $Y_n^{\BBR}:= N_{I(y)}(W_n^{\BBR})$, we have
\begin{equation*}
\E[Y_n^{\BBR}] =n\rho_{sc}(I(y))+O(1) \quad \textrm{and} \quad \Var(Y_n^{\BBR})
=\Big(\frac{1}{\pi^2}+o(1)\Big)\log n,
\end{equation*}
and the sequence $(Y_n^{\BBR})$ satisfies the CLT
\begin{equation*}
\frac{Y_n^{\BBR} -\E[Y_n^{\BBR}] }{\sqrt{\Var(Y_n^{\BBR})}} \underset{n \to \infty}{\to} \CN(0,1).
\end{equation*}
\end{theorem}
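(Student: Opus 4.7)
The plan is to reduce the GOE statement to Gustavsson's CLT for the GUE via the Forrester--Rains interlacing formula $\mathrm{GUE}_n = \mathrm{even}(\mathrm{GOE}_n\cup\mathrm{GOE}_{n+1})$. Let $W_n^{\BBR}$ and $\widetilde{W}_{n+1}^{\BBR}$ be independent normalized GOE matrices of sizes $n$ and $n+1$, write $S_n = N_{I(y)}(W_n^{\BBR})$ and $S_{n+1}' = N_{I(y)}(\widetilde{W}_{n+1}^{\BBR})$, and let $T_n = N_{I(y)}(W_n^{\BBC})$ for the corresponding GUE. Because $T_n$ counts every other one of the $2n+1$ combined sorted eigenvalues lying in $I(y)$, one has the deterministic identity $2T_n = S_n + S_{n+1}' + \delta_n$ with $|\delta_n|\le 1$ a parity correction at the endpoint $y$; this is the engine of the proof.

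The mean and variance statements in Theorem \ref{GOE} follow immediately from the two consequences of this identity stated just before the theorem, namely $\E[N_I(W_n^{\BBR})] = \E[N_I(W_n^{\BBC})] + O(1)$ and $\Var(N_I(W_n^{\BBR})) = 2\Var(N_I(W_n^{\BBC})) + O(1)$, combined with Gustavsson's asymptotics \eqref{Gustavsson} for the GUE. The doubling of the variance is exactly what turns the coefficient $1/(2\pi^2)$ into $1/\pi^2$.

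The substantive content is the CLT. From \eqref{CLT} applied to $T_n$ we have $(T_n-\E[T_n])/\sqrt{\Var(T_n)} \underset{n\to\infty}{\to} \CN(0,1)$. Inserting $2T_n = S_n + S_{n+1}' + \delta_n$ and using that $\delta_n$ is bounded while $\Var(T_n)\to\infty$, one obtains
\begin{equation*}
\frac{(S_n - \E[S_n]) + (S_{n+1}' - \E[S_{n+1}'])}{\sqrt{2\Var(S_n)}} \underset{n\to\infty}{\to} \CN(0,1).
\end{equation*}
Since $S_n$ and $S_{n+1}'$ are independent and asymptotically equidistributed (sizes differing by $1$), the centered-and-normalized sequence $Z_n=(S_n-\E[S_n])/\sqrt{\Var(S_n)}$ is tight: its characteristic function $\varphi_n$ satisfies $|\varphi_n(t)|\to e^{-t^2/2}$ uniformly on compact sets, which gives tightness by L\'evy's continuity theorem. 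Along any convergent subsequence with limit law $\mu$, the independent sum $X+X'$ with $X,X'\sim\mu$ is Gaussian $\CN(0,2)$, so by Cram\'er's theorem on decompositions of Gaussian distributions $\mu$ must itself be Gaussian, necessarily $\CN(0,1)$. Every subsequential limit is therefore $\CN(0,1)$, which yields the full CLT.

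The main obstacle is this final transfer step, from a CLT for the independent sum $S_n+S_{n+1}'$ to a CLT for $S_n$ alone. The key ingredients are independence (immediate from the Forrester--Rains construction), the asymptotic identity of distribution of $S_n$ and $S_{n+1}'$ (a size-one comparison handled by standard Cauchy-type interlacing inequalities between $n\times n$ and $(n+1)\times(n+1)$ GOE spectra), and Cram\'er's decomposition theorem, which is the essential probabilistic input that promotes Gaussianity of the sum to Gaussianity of each summand.
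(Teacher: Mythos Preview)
The paper does not give a detailed proof of Theorem \ref{GOE}; it attributes the result to O'Rourke \cite{O_Rourke} and only records that the argument rests on the Forrester--Rains interlacing together with Gustavsson's GUE results, yielding the two relations $\E[N_I(W_n^{\BBR})]=\E[N_I(W_n^{\BBC})]+O(1)$ and $\Var(N_I(W_n^{\BBR}))=2\Var(N_I(W_n^{\BBC}))+O(1)$ stated just before the theorem. Your proposal follows exactly this outline for the mean and variance, so there is nothing to compare there.

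For the CLT itself, your transfer via Cram\'er's decomposition theorem is a valid way to pass from the GUE CLT for $S_n+S_{n+1}'$ to the GOE CLT for $S_n$, but two points deserve tightening. First, the claim $|\varphi_n(t)|\to e^{-t^2/2}$ does not follow from $\varphi_n\psi_n\to e^{-t^2}$ alone without already knowing $\varphi_n-\psi_n\to 0$; in any case tightness of $(Z_n)$ is immediate from $\Var(Z_n)=1$ via Chebyshev, so this detour is unnecessary. Second, Cauchy interlacing does not compare the two \emph{independent} GOE matrices appearing in the Forrester--Rains construction; what it does compare is an $(n{+}1)\times(n{+}1)$ GOE matrix with its $n\times n$ principal submatrix, and this yields a coupling of the \emph{laws} of $S_{n+1}$ and $S_n$ within $O(1)$ (plus an $O_P(1)$ correction coming from the $\sqrt{n}$ versus $\sqrt{n+1}$ normalization). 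Dividing by $\sqrt{\Var(S_n)}\to\infty$ then gives the asymptotic equidistribution you need, after which the subsequential limits of $Z_n$ and $Z_n'$ coincide and the Cram\'er step (or simply extracting the continuous square root of the limiting characteristic function, which already forces $\CN(0,1)$) concludes. With these clarifications your argument is sound and consistent with the approach the paper sketches.
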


Following exactly the same scheme as for complex Wigner matrices leads to the same conclusion: Theorem
\ref{GOE} is true for Wigner symmetric matrices, provided their entries match the corresponding entries of GOE
up to order $4$ and satisfy condition ($\C0$).

The CLT for the eigenvalue counting function has been investigated as well for families of covariance
matrices. The main conclusion of this work holds similarly in this case conditioned however on the validity of
the Erd\"{o}s-Yau-Yin rigidity theorem for covariance matrices. There is strong indication that the current
approach by Erd\"{o}s, Yin and Yau for Wigner matrices will indeed yield such a result. All the other
ingredients of the proof are besides available. Indeed, Su (cf. \cite{Su}) carried out computations for
Gaussian covariance matrices and proved both the CLT and the correct asymptotics for mean and variance. Tao
and Vu in \cite{Tao_Vu_covariance} extended their Four Moment Theorem to such matrices. Arguing then as for
Wigner matrices, we could reach in the same way a CLT with numerics for suitable families of covariance
matrices.

\end{document}